\newcommand{\prefix}[1]{\left|#1^{\#} \cap [0, x]\right|}
\author{Karim F. Shamazov}
\address{\parbox{\linewidth}{HSE University, Faculty of Computer Science, \\ 11 Pokrovsky Blvd., 109028 Moscow, Russia}}
\email[]{kfshamazov@edu.hse.ru}
\author{Alexey L. Talambutsa}
\address{\parbox{\linewidth}{Steklov Mathematical Institute of RAS, 8 Gubkina St., 119991 Moscow, Russia\\
HSE University, Laboratory of Theoretical Computer Science, \\ 11 Pokrovsky Blvd., 109028 Moscow, Russia}}
\email[]{altal@mi-ras.ru}
\title[On orbit sets generated by semigroups of affine functions]{On orbit sets generated by semigroups of \\ one-dimensional affine functions}
\begin{document}

\begin{abstract}
The one-dimensional orbit set $\langle F : s \rangle$ is formed by the images of a number $s$ under the action of a semigroup generated by integer affine functions $f_i=a_i x+b_i$ taken from the set $F=\{f_1,\ldots,f_n\}$. P.Erd\H{o}s established an upper bound $O(x^{\sigma+\epsilon})$ for the growth function $|\langle F : s \rangle\cap[0,x]|$, where $1/a_1^{\sigma}+1/a_2^{\sigma}+\ldots + 1/a_n^{\sigma}=1$ and $\varepsilon>0$, which was extended to orbit multisets and real affine functions by J.Lagarias. We complement this by a lower bound $\Omega(x^{\sigma})$ for the multiset size $|\langle F : s \rangle^\#\cap[0,x]|$.

P.~Erd\H{o}s and R.~Graham asked whether an orbit set $\langle F : s \rangle$ has positive density when $F$ is a basis of a free semigroup and $1/a_1+1/a_2+\ldots + 1/a_n=1$. Under these two conditions, we establish a sublinear lower bound $|\langle F : s \rangle \cap [0,x]|=\Omega(x/\log^{\frac{n-1}2} x)$. We also show that in the case when the functions of $F$ form an exact covering system of integers, i.e. when $f_1(\mathbb Z) \sqcup \ldots \sqcup f_n(\mathbb Z)=\mathbb Z$, this bound can be strengthened to $\Omega(x)$, so the set $\langle F : s \rangle$ has positive density.
\end{abstract}

\maketitle

\section{Introduction}

The consideration of one-dimensional integer orbit sets was originated in early 1970's by D.J.~Crampin, A.J.W.~Hilton in connection with their work on constructing self-orthogonal Latin squares of large even sizes. Shortly after that, the general study of such sets was initiated by D.~Klarner and R.~Rado in \cite{KlarnerRado74}. They defined the \emph{orbit set} $\langle F : S\rangle$ as the minimal set containing a given set of integers $S$, which is closed under a given set of operations $F=\{f_1,f_2,\ldots, f_n\}$, each having form $f=m_1x_1+m_2x_2+\ldots+m_dx_d+b$, where the coefficients $m_1,\ldots,m_d,b$ are non-negative integers. 

A natural question of arithmetical combinatorics can be asked for the set $\langle F : S\rangle$: Does it contain an infinite arithmetic progression? It is not known if this problem is algorithmically decidable even in the one-dimensional case $d=1$. A closely related question asks about posivity of the upper asymptotic density 
\[\overline d\left(\langle F : S\rangle\right) = \liminf_{x\to\infty}\frac1x |\langle F : S\rangle \cap [0,x]|.\]

P.Erd\H{o}s obtained upper bound on the size of the orbit sets $\langle F : S \rangle$ in the case when $F=\{f_1,\ldots,f_n\}$, where $f_i(x)=a_ix+b_i$. From this bound it follows, that if $S$ is a finite set and $1/a_1+\ldots+1/a_n<1$, then $\langle F : S\rangle$ has zero upper density, hence it does not contain arithmetic progressions. As a partial problem, for which this method was not directly applicable, Erd\H{o}s suggested to study the density of the set $\langle 2x+1,3x+1,6x+1 \rangle$. Crampin and Hilton showed that for this set the density is zero, extending the method of Erd\H{o}s to the case when $1/a_1+1/a_2+\ldots+1/a_n=1$ and, additionally, there is a non-trivial semigroup relation between the functions from $F$ of the form 
\begin{equation}
f_{u_1}\circ \ldots\circ f_{u_p} = f_{v_1}\circ \ldots\circ f_{v_q},
\label{eq-rel}
\end{equation}
where $u_1,\ldots, u_p,v_1,\ldots, v_q\in \{1,\ldots,k\}$ and $(u_1,\ldots, u_p)\ne (v_1,\ldots, v_s)$. 
After that, Erd\H{o}s and Graham in \cite[p.22--23]{ErdosGraham} posed a general question on positivity of the density, where they highlighted the case when $1/a_1+1/a_2+\ldots + 1/a_n=1$ and the functions of $F$ form a basis of a free semigroup, i.e. no relation having form \eqref{eq-rel} exists.

The results of Erd\H{o}s, Crampin and Hilton brought Klarner to studying the conditions, under which there is no relation between a set of one-dimensional affine functions, and he established some necessary and sufficient conditions for this in \cite{Klarner82}. As a corollary he found six integer triples $(a,b,c)$, such that the functions $f_2=2x+a,f_3=3x+b,f_6=6x+c$ form a basis of a free semigroup and suggested to study if the set $\langle f_2,f_3,f_6 : 0 \rangle$ has positive density. This question remains unanswered for all these triples, including $(0,2,3)$ for which the problem reappeared in the famous paper \cite[Problem 4]{Guy} and monograph \cite[E36]{Guy-monograph} by R.~Guy. 

A nice recent exposition of the results described above and some other related results can be found in the paper \cite{Lagarias} by J.Lagarias in more details. Lagarias suggested to generalize the orbit sets to the functions with non-integer coefficients, which allowed to reformulate Collatz conjecture as a question about an orbit set. He also extended the upper bound of Erd\H{o}s to multisets $\langle F : S\rangle^\#$, where $S$ is a starting multiset and $F=\{f_i \mid i\in \mathcal I\}$ is a (possibly infinite) set of affine functions
\begin{equation*}
f_i(x) = a_i x + b_i, \text{ where } a_i\geq 1 \text{ and } b_i\geq 0.
\end{equation*}
Then $\langle F : S\rangle^\#$ is a union of multisets $\bigcup_{i=0}^\infty S_i$, where $S_0=S$ and $S_{i+1}=\bigcup_{j=1}^kf_j(S_i)$. The \emph{growth function} $|\langle F : S\rangle^\#\cap [0,x]|$ is then well defined once the multiset $S\subset \mathbb R_{>0}$ is discrete (i.e. has no finite limit points). 

Under the conditions above the result of Erd\H{o}s (reported in \cite[Theorem 8]{KlarnerRado74}) was generalized to the following statement (see \cite[Theorem 3]{Lagarias}):

\begin{theorem}[Erdős--Lagarias Upper Bound]
\label{EL-theorem}
Let $F$ be a (possibly infinite) set of affine functions
\begin{equation*}
F = \{f_i(x) = a_i x + b_i \mid i \in \mathcal I\},
\end{equation*}
where $a_i \geq 1, b_i \geq 0$. Suppose that $\sigma$ is a positive real number such that
\begin{equation*}
\alpha = \sum_{i \in \mathcal I} \frac 1{a_i^{\sigma}} < 1.
\end{equation*}
Let $S \subset \mathbb R_+$~ be a discrete set. Then for any $x \geq 1$
\begin{equation*}
\prefix {\langle F : S \rangle} \leq \frac 1{1 - \alpha}\left(\sum_ {\{s \in S \, \mid \, 0 \leq s \leq x\}} \frac 1{s^{\sigma}}\right)x^{\sigma}.
\end{equation*}
\end{theorem}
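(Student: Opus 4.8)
The plan is to pass from the counting function to a weighted sum that behaves multiplicatively under composition. Write $\mathcal{O} = \langle F : S \rangle^{\#}$ for the orbit multiset and, for $y > 0$, put
\[ \Phi(y) = \sum_{\substack{t \in \mathcal{O} \\ t \le y}} \frac{1}{t^{\sigma}}, \qquad \Phi_S(y) = \sum_{\substack{s \in S \\ 0 < s \le y}} \frac{1}{s^{\sigma}}, \]
the sums over $\mathcal{O}$ being taken over occurrences counted with multiplicity. Since $S$ is discrete and contained in $\mathbb R_{>0}$, the set $S \cap (0,y]$ is finite, so $\Phi_S(y) < \infty$. Every $t \in \mathcal{O}$ is positive, and if $t \le x$ then $(x/t)^{\sigma} \ge 1$; hence $\prefix{\langle F : S \rangle} = \sum_{t \in \mathcal{O},\, t \le x} 1 \le x^{\sigma}\Phi(x)$, and it suffices to prove the clean inequality $\Phi(x) \le \Phi_S(x)/(1-\alpha)$.

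The next step records the self-similar structure of $\mathcal{O}$. From $S_0 = S$ and $S_{k+1} = \biguplus_{i \in \mathcal I} f_i(S_k)$ one obtains, for every $k \ge 1$, the multiset identity
\[ \mathcal{O}_{\le k} := \biguplus_{j=0}^{k} S_j = S \uplus \biguplus_{i \in \mathcal I} f_i\bigl(\mathcal{O}_{\le k-1}\bigr), \]
where we set $\Phi_{\le k}(x) = \sum_{t \in \mathcal{O}_{\le k},\, t \le x} t^{-\sigma}$. Because $b_i \ge 0$ and $a_i \ge 1$, we have $f_i(t) \ge a_i t \ge t$ for $t > 0$; thus $f_i(t) \le x$ forces $t \le x$, and on that range $f_i(t)^{-\sigma} \le a_i^{-\sigma} t^{-\sigma}$. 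Feeding these two observations into the decomposition above gives
\[ \Phi_{\le k}(x) \le \Phi_S(x) + \sum_{i \in \mathcal I} \frac{1}{a_i^{\sigma}}\,\Phi_{\le k-1}(x) = \Phi_S(x) + \alpha\,\Phi_{\le k-1}(x). \]
Since $\Phi_{\le 0}(x) = \Phi_S(x)$ is finite, this recursion simultaneously propagates finiteness of $\Phi_{\le k}(x)$ and, by an immediate induction on $k$, yields $\Phi_{\le k}(x) \le \frac{1-\alpha^{k+1}}{1-\alpha}\,\Phi_S(x)$. Letting $k \to \infty$ (using $0 \le \alpha < 1$ and that $\mathcal{O}_{\le k}$ increases to $\mathcal{O}$) gives $\Phi(x) \le \Phi_S(x)/(1-\alpha)$; together with the reduction of the first paragraph this is exactly the asserted bound, and as a byproduct $\prefix{\langle F : S \rangle} < \infty$, so $\mathcal{O}$ is automatically discrete.

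The computation is short, and the one point that genuinely needs care is not to write the self-referential inequality $\Phi(x) \le \Phi_S(x) + \alpha\,\Phi(x)$ and cancel before $\Phi(x) < \infty$ is established; running the argument through the finite truncations $\mathcal{O}_{\le k}$ is precisely what removes this gap. An equivalent and perhaps more transparent route is to iterate the crude recursion $\prefix{\langle F : S \rangle} \le \bigl|S \cap [0,x]\bigr| + \sum_{i \in \mathcal I} \bigl|\langle F : S \rangle^{\#} \cap [0, x/a_i]\bigr|$, which unrolls into a sum over all finite index words $w = (i_1,\dots,i_k)$ and starting points $s \in S$ subject to $s\prod_{\ell} a_{i_\ell} \le x$; bounding the number of admissible words by $(x/s)^{\sigma}\sum_{\text{all }w}\prod_{\ell} a_{i_\ell}^{-\sigma} = (x/s)^{\sigma}\sum_{k \ge 0}\alpha^{k} = (x/s)^{\sigma}/(1-\alpha)$ reproduces the same constant.
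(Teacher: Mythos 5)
The paper does not prove this theorem: it is quoted as a known result, namely Theorem 3 of the cited paper of Lagarias (itself generalizing Theorem 8 of Klarner--Rado), so there is no in-paper proof to compare against. Your argument is correct and is essentially the standard Erd\H{o}s--Lagarias weighted-sum proof: bound the counting function by $x^{\sigma}\sum_{t\le x}t^{-\sigma}$, exploit the self-similar multiset decomposition to get the contraction $\Phi_{\le k}(x)\le \Phi_S(x)+\alpha\,\Phi_{\le k-1}(x)$, and sum the geometric series; your care in running the recursion through finite truncations before cancelling is exactly the right way to avoid the circular ``$\Phi\le \Phi_S+\alpha\Phi$'' step when finiteness is not yet known.
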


Clearly, an upper bound for  a multiset $\langle F : S\rangle^\#$ serves as a bound for the set $\langle F : S\rangle^\#$.

\smallskip

The goal of this paper is to prove a lower bound on the size of one-dimensional orbit multisets, which complements the result of Erd\H{o}s-Lagarias, to provide almost linear bounds for the growth function of the orbit sets from the question of Erd\H{o}s-Graham, and to answer the latter question in the special case of exact covering systems.

\section{Lower bound for the image multiset}



In this section we will prove the following lower bound for the size of the multiset $\langle F : S\rangle^\#$, which complements the upper bound of Erd\H{o}s and Lagarias. It will be used in Section \ref{ecc-section} to establish the lower bounds on the sizes of the orbit sets, which are generated by the exact covering systems.

\newcommand{\filterc}{x(\delta - 1) \geq \denominator}
\newcommand{\denominator}{s(\delta - 1) + \beta}

\begin{theorem}\label{lowerbound}
Let $F$ be a finite set of affine functions
\begin{equation*}
F = \{f_i(x) = a_i x + b_i \mid i = 1,\ldots,n\},
\end{equation*}
such that $a_i > 1, b_i \geq 0$. Suppose that $\sigma$ is a positive real number such that
\begin{equation*}
\sum_{i=1}^n \frac 1{a_i^{\sigma}} = 1.
\end{equation*}
Let $S \subset \mathbb R_+$~ be a discrete set. Then for any $x \geq 1$
\begin{equation*}
\prefix {\langle F : S \rangle} \geq \frac {(\delta - 1)^{\sigma}}{\gamma^{\sigma}}\left(\sum_ {\substack{s \in S \\ \filterc}} \frac 1{(\denominator)^{\sigma}}\right)x^{\sigma},
\end{equation*}
where $\delta = \min\limits_{i=1,\ldots,n} a_i, \beta = \max\limits_{i=1,\ldots,n} b_i, \gamma = \max\limits_{i=1,\ldots,n} a_i$.
\end{theorem}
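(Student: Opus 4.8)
The plan is to rewrite the growth function as a sum over the starting points $s\in S$ of the number of composition words keeping $s$ below $x$, and then to bound this combinatorial count from below by an induction whose only real input is the identity $\sum_{i}a_i^{-\sigma}=1$. For a word $w=(u_1,\dots,u_k)$ over $\{1,\dots,n\}$ write $f_w=f_{u_1}\circ\cdots\circ f_{u_k}$ and $A_w=a_{u_1}\cdots a_{u_k}$, with $f_\emptyset=\mathrm{id}$ and $A_\emptyset=1$; then $f_w(s)=A_w s+B_w$ with $B_w=\sum_{j=1}^{k}\bigl(\prod_{l<j}a_{u_l}\bigr)b_{u_j}\ge0$. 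Unwinding $\langle F:S\rangle^\#=\bigcup_{i\ge0}S_i$, the multiplicity of a value $v$ in the orbit multiset equals $\#\{(w,s):s\in S,\ f_w(s)=v\}$; since $f_w(s)\ge s$, this gives the identity
\[
\prefix{\langle F : S \rangle}=\sum_{s\in S}\#\{\,w:f_w(s)\le x\,\},
\]
a finite sum because $S$ is discrete, so it suffices to bound each inner count from below.

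Next I would control the additive part $B_w$. Each factor of $A_w$ is $\ge\delta>1$, so $\prod_{l<j}a_{u_l}\le A_w\delta^{-(k-j+1)}$, and summing the geometric series gives $B_w\le\beta A_w\sum_{m\ge1}\delta^{-m}=\tfrac{\beta}{\delta-1}A_w$. Hence
\[
f_w(s)\ \le\ A_w\Bigl(s+\tfrac{\beta}{\delta-1}\Bigr)=\frac{A_w\,(\denominator)}{\delta-1},
\]
so $f_w(s)\le x$ holds whenever $A_w\le y_s$, where $y_s:=x(\delta-1)/(\denominator)$. Writing $N(y):=\#\{w:A_w\le y\}$ we get $\#\{w:f_w(s)\le x\}\ge N(y_s)$, and $y_s\ge1$ is exactly the condition $\filterc$.

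The heart of the proof is the estimate $N(y)\ge(y/\gamma)^{\sigma}$ for all real $y\ge1$. I would prove it by induction on $h(y):=\max\{|w|:A_w\le y\}$ (finite, since $|w|\le\log_\delta y$), via the recursion $N(y)=1+\sum_{i=1}^{n}N(y/a_i)$ with the convention $N(z):=0$ for $z<1$. If $1\le y<\delta$ then $h(y)=0$, $N(y)=1$, and $(y/\gamma)^{\sigma}<1$. If $y\ge\delta$, the terms with $a_i>y$ vanish, while for $a_i\le y$ we have $1\le y/a_i<y$, so the induction hypothesis applies and
\[
N(y)\ \ge\ 1+\sum_{i:\,a_i\le y}\Bigl(\frac{y}{a_i\gamma}\Bigr)^{\sigma}=1+\Bigl(\frac{y}{\gamma}\Bigr)^{\sigma}\Bigl(1-\sum_{i:\,a_i>y}a_i^{-\sigma}\Bigr),
\]
using $\sum_i a_i^{-\sigma}=1$. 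Thus it remains to check $(y/\gamma)^{\sigma}\sum_{i:a_i>y}a_i^{-\sigma}\le1$, and here lies the only subtlety: if $y\ge\gamma$ the sum is empty, while if $y<\gamma$ then $(y/\gamma)^{\sigma}<1$ and $\sum_{i:a_i>y}a_i^{-\sigma}\le\sum_i a_i^{-\sigma}=1$. This $y\ge\gamma$ versus $y<\gamma$ dichotomy is precisely what neutralises the "boundary" indices $i$ with $y/a_i<1$, to which the induction hypothesis cannot be applied; I expect locating and handling this point to be the main obstacle, the rest being bookkeeping.

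To conclude: for every $s\in S$ with $\filterc$ we have $\#\{w:f_w(s)\le x\}\ge N(y_s)\ge(y_s/\gamma)^{\sigma}=(\delta-1)^{\sigma}\gamma^{-\sigma}(\denominator)^{-\sigma}x^{\sigma}$, and summing over these $s$ — the remaining $s\in S$ contribute nonnegative terms which we simply discard, and this is exactly what produces the filtered sum in the statement — yields the asserted inequality.
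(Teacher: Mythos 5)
Your proposal is correct and follows essentially the same route as the paper: the same reduction to counting words $w$ with $A_w \le x(\delta-1)/(s(\delta-1)+\beta)$ via the bound $B_w \le \beta A_w/(\delta-1)$, and the same induction showing $N(y)\ge (y/\gamma)^{\sigma}$ using $\sum_i a_i^{-\sigma}=1$. The only (cosmetic) difference is in the induction's bookkeeping: you induct on the maximal word length and dispose of the boundary indices with $a_i>y$ by a $y\ge\gamma$ versus $y<\gamma$ case split, whereas the paper inducts over the ranges $1\le x<\delta^k\gamma$, which guarantees $x/a_i\ge 1$ for every $i$ in the inductive step and so never meets that boundary case.
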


\begin{remark}
The statements of Theorems \ref{EL-theorem} and \ref{lowerbound} seem quite similar, yet together they do not characterize the asymptotic behavior of the function $\prefix {\langle F : S \rangle}$ even when the set $S$ is finite. Indeed, the lower bound of Theorem \ref{lowerbound} is $\Omega(x^{\sigma})$, where $\sum_{i=1}^n \frac 1{a_i^{\sigma}}=1$ and the upper bound that can be provided by Theorem \ref{EL-theorem} is only $O(x^{\sigma+\varepsilon})$ for any $\varepsilon>0$. One sees that there is a gap between these two estimations, which includes, for example, functions $cx^{\sigma}\log^d x$, where $c,d>0$. 
Thus, we are left with the question whether for any function set $F$ as stated and a finite set $S$ one has $\prefix {\langle F : S \rangle}=\Theta(x^{\sigma})$, i.e., whether there exist two constants $c_1,c_2>0$ such that $c_1 x^{\sigma}<\prefix {\langle F : S \rangle}<c_2 x^{\sigma}$.
An example of the set $S_a=\langle 2x, 2x+1 : a\rangle$ from \cite[p.770]{Lagarias} shows that one cannot hope for a general estimate that is more precise than $\Theta(x^{\sigma})$ as the growth function in this example oscillates between $x/a$ and $2x/(a+1)$.
\end{remark}

To establish Theorem \ref{lowerbound} we first prove the following auxiliary result:

\begin{lemma}
Let $F$ be a finite set of affine functions $F = \{f_i(x) = a_i x + b_i \mid i=1,\ldots,n\}$,
such that $a_i > 1, b_i \geq 0$.
Then for any tuple $I=(i_1,\ldots,i_r)\in \{1,\ldots,n\}^r$ and any $s\in \mathbb R$
\begin{equation*}
f_I(s) 
\leq \frac{\denominator}{\delta - 1} a_{i_1}a_{i_2}\ldots a_{i_r},
\label{upper_est}
\end{equation*} 
where $\delta = \min\limits_{i=1,\ldots,n} a_i, \beta = \max\limits_{i=1,\ldots,n} b_i$.
\end{lemma}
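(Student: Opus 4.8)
The plan is to compute $f_I(s)$ in closed form and notice that, once written out, the claimed inequality no longer involves $s$. With the convention $f_I = f_{i_1}\circ f_{i_2}\circ\cdots\circ f_{i_r}$, unwinding the composition gives
\begin{equation*}
f_I(s) = \Bigl(\prod_{k=1}^r a_{i_k}\Bigr) s + \sum_{j=1}^r b_{i_j}\prod_{k=1}^{j-1} a_{i_k};
\end{equation*}
for the opposite order of composition the constant term is $\sum_j b_{i_j}\prod_{k=j+1}^r a_{i_k}$ instead, but the estimate below is insensitive to the choice. Write $P=\prod_{k=1}^r a_{i_k}$ and let $B_I$ denote the constant term. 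Since $\frac{s(\delta-1)+\beta}{\delta-1} = s + \frac{\beta}{\delta-1}$, the inequality to be proved is $f_I(s)\le\bigl(s+\frac{\beta}{\delta-1}\bigr)P$; subtracting $Ps$ from both sides, this is equivalent to $B_I\le\frac{\beta}{\delta-1}P$, which is the crucial simplification — the $s$-dependence cancels, so it suffices to bound the constant term.

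To bound $B_I$, I would estimate each summand: replace $b_{i_j}$ by $\beta=\max_i b_i$, and rewrite $\prod_{k=1}^{j-1}a_{i_k} = P\big/\prod_{k=j}^r a_{i_k}\le P/\delta^{\,r-j+1}$, using $a_{i_k}\ge\delta>1$ for the $r-j+1$ discarded factors. Summing over $j=1,\dots,r$ and reindexing by $m=r-j+1$ gives
\begin{equation*}
B_I \;\le\; \beta P\sum_{m=1}^r \frac{1}{\delta^{m}} \;<\; \beta P\sum_{m=1}^\infty \frac{1}{\delta^{m}} \;=\; \frac{\beta}{\delta-1}\,P,
\end{equation*}
which is exactly what was needed; adding $Ps$ back recovers $f_I(s)\le\frac{s(\delta-1)+\beta}{\delta-1}\,a_{i_1}\cdots a_{i_r}$. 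The same computation works verbatim for the other composition order, and a short induction on $r$ (appending $f_{i_r}$ as the innermost map and using $\frac{a_{i_r}-1}{\delta-1}\ge 1$) gives an alternative route to the same bound.

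There is no genuine obstacle: the lemma is essentially bookkeeping. The only points deserving care are keeping the composition order straight when expanding $f_I(s)$, and invoking $\delta>1$ so that the geometric series $\sum_m\delta^{-m}$ converges to $\frac{1}{\delta-1}$. It is worth noting that the normalization $\sum_i a_i^{-\sigma}=1$ is not used here at all; it enters only later, when this lemma is combined with a count of the tuples $I$ whose images fall in $[0,x]$ to prove Theorem~\ref{lowerbound}.
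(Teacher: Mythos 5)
Your proof is correct and follows essentially the same route as the paper: expand the composition to get $f_I(s)=Ps+B_I$ and reduce the claim to $B_I\le\frac{\beta}{\delta-1}P$. The only (cosmetic) difference is that you bound $\sum_{j=1}^r a_{i_1}\cdots a_{i_{j-1}}$ by dominating it with the geometric series $P\sum_{m\ge1}\delta^{-m}=\frac{P}{\delta-1}$, whereas the paper verifies the same inequality by multiplying by $\delta-1$ and observing that the rearranged terms are non-positive.
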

\begin{proof}
First, a straightforward check shows that 
\begin{equation}
f_I(s) = a_{i_1}a_{i_2}\ldots a_{i_r}s + \sum_{j = 1}^r a_{i_1}a_{i_2}\ldots a_{i_{j - 1}}b_j.
\label{direct_computation}
\end{equation} 

In order to establish the desired inequality $f_I(s)\leq \dfrac{\denominator}{\delta - 1} a_{i_1}a_{i_2}\ldots a_{i_r}$ we prove the following auxiliary estimation:
\begin{equation}
\sum_{j = 1}^r a_{i_1}a_{i_2}\ldots a_{i_{j - 1}} < \frac{a_{i_1}a_{i_2}\ldots a_{i_r}}{\delta - 1}.
\label{inter_ineq}
\end{equation}
To check this, we consider the following equivalent inequality obtained by multiplication by $\delta-1$ and subsequent rearrangement of the terms.
\begin{equation*}
\sum_{j = 1}^{r - 1} (a_{i_1}a_{i_2}\ldots a_{i_{j - 1}}\delta - a_{i_1}a_{i_2}\ldots a_{i_{j - 1}}a_{i_j}) + a_{i_1}a_{i_2}\ldots a_{i_{r - 1}}\delta - 1 < a_{i_1}a_{i_2}\ldots a_{i_r}.
\end{equation*}
Since $\delta=\min a_i$, the sum consists of non-positive values, so this inequality and \eqref{inter_ineq} hold true. As, moreover, $\beta=\max b_j$, from \eqref{direct_computation} and \eqref{inter_ineq} we obtain the Lemma \ref{upper_est}.
\end{proof}

\begin{proof}[Proof of Theorem \ref{lowerbound}.]
For positive $x \geq 1$, we define
\begin{equation*}
N(x) = \{I = (i_1, \ldots, i_r) \mid r \geq 0, a_{i_1}\ldots a_{i_r} \leq x\}.
\end{equation*}
The size of this set is $|N(x)| \geq 1$ for $1 \leq x < \gamma$. First we will show that for any $x \geq 1$
\begin{equation}
|N(x)| \geq \frac {1}{\gamma^{\sigma}}x^{\sigma} 
\label{Nx_lower_bound}
\end{equation}

We will prove this claim for the half-interval $1 \leq x < \delta^k\gamma$ by induction on $k$. The base for $k = 0$ is:
\begin{equation*}
|N(x)| \geq 1 = \frac{\gamma^{\sigma}}{\gamma^{\sigma}} \geq \frac {1}{\gamma^{\sigma}}x^{\sigma}
\end{equation*}
Inductive step from $k$ to $k + 1$ is:
\begin{equation*}
|N(x)| = 1 + \sum_{i=1}^n\left|N\left(\frac{x}{a_i}\right)\right|.
\end{equation*}
Since $1 \leq \delta^k \leq \frac x{\gamma} \leq \frac x{a_i} \leq \frac x{\delta} < \delta^k\gamma$, we can apply the induction hypothesis:
\begin{eqnarray*}
|N(x)| &\geq& 1 + \frac {1}{\gamma^{\sigma}}\left(\sum_{i=1}^n \left(\frac x{a_i}\right)^{\sigma}\right)\\
&=& 1 + \frac {1}{\gamma^{\sigma}}\left(\sum_{i=1}^n \frac 1{a_i^{\sigma}}\right)x^{\sigma}\\
&=& 1 + \frac {1}{\gamma^{\sigma}}x^{\sigma} \geq \frac {1}{\gamma^{\sigma}}x^{\sigma}.
\end{eqnarray*}

Now we will show that for any $x > 0$
\begin{equation}
\prefix {\langle F : \{s\} \rangle} \geq \left|N\left(\frac{x(\delta - 1)}{\denominator}\right)\right|.
\label{lower_estimation}
\end{equation}
Let us consider such tuples $I=(i_1,\ldots, i_r)$ such that 
\begin{equation}
\dfrac{\denominator}{\delta - 1} a_{i_1}a_{i_2}\ldots a_{i_r} \leq x.
\label{Ix-bound}
\end{equation}
Then, according to Lemma \ref{upper_est} we obtain that for any such tuple $I$ we have $f_I(s) \leq x$. The definition of the set $N$ shows that any tuple $I$ satisfying \eqref{Ix-bound} is in the set 
\[
N\left(\frac{x(\delta - 1)}{\denominator}\right),
\] so the estimation \eqref{lower_estimation} follows.


\medskip

For $\filterc$ the right hand side of \eqref{lower_estimation} can be estimated from below by $\frac{1}{\gamma^{\sigma}}\left(\frac{x(\delta - 1)}{\denominator}\right)^{\sigma}$ due to bound \eqref{Nx_lower_bound}. For others $\left|N\left(\frac{x(\delta - 1)}{\denominator}\right)\right| = 0$. This allows us to finally obtain the main claim of the Theorem:

\begin{eqnarray*}
\prefix {\langle F : S \rangle} &=& \sum_{\substack{s\in S}} \prefix {\langle F : \{s\} \rangle}\\
&\geq& \frac{1}{\gamma^{\sigma}}\left(\sum_{\substack{s \in S \\ \filterc}}\left(\frac{x(\delta - 1)}{\denominator}\right)^{\sigma}\right)\\
&=& \frac{(\delta - 1)^{\sigma}}{\gamma^{\sigma}}\left(\sum_{\substack{s \in S \\ \filterc}}\frac{1}{(\denominator)^{\sigma}}\right)x^{\sigma}.
\end{eqnarray*}
\end{proof}


\section{Sublinear bound for Erd\H{o}s-Graham question}

\begin{theorem}\label{subbound}
Let $F$ be a finite set of affine functions
\begin{equation*}
F = \{f_i(x) = a_i x + b_i \mid i=1,\ldots, n\},
\end{equation*}
such that $a_i > 1, b_i \geq 0$. Suppose that $F$ is a basis of a free semigroup and $\sum\limits_{i=1}^n\frac1{a_i}=1$. Then for any $s\geq 0$
\begin{equation*}
\left|\langle F : s \rangle\cap [0, x]\right| = \Omega(x/\log^{\frac{n-1}2}(x)), \text{ when }x\to \infty.
\end{equation*}
\end{theorem}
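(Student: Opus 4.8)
The plan is to count orbit elements of size at most $x$ by counting the tuples $I=(i_1,\ldots,i_r)\in\{1,\ldots,n\}^*$ that produce them, using the freeness of the semigroup to control how much overcounting occurs. Since $F$ is a free basis, the map $I\mapsto f_I$ is injective on tuples, so the only way two tuples $I\ne J$ can give $f_I(s)=f_J(s)$ is an accidental numerical coincidence at the single point $s$ (not a semigroup identity). Following the computation in equation \eqref{direct_computation}, write $f_I(s)=A_I s+B_I$ where $A_I=a_{i_1}\cdots a_{i_r}$ and $B_I=\sum_{j=1}^r a_{i_1}\cdots a_{i_{j-1}}b_{i_j}$. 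The Erdős–Lagarias upper bound applied with $\sigma=1$ (where $\sum 1/a_i=1$, so we are exactly at the critical exponent) already tells us the set $\langle F:s\rangle$ is $O(x)$; the task is the matching lower bound up to the logarithmic factor.

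First I would estimate the number of \emph{tuples} $I$ with $f_I(s)\le x$. By the Lemma preceding Theorem \ref{lowerbound}, the condition $A_I\le x(\delta-1)/(s(\delta-1)+\beta)$ suffices to guarantee $f_I(s)\le x$, and more simply $A_I\le cx$ for a constant $c=c(s,F)>0$ suffices. So the number of valid tuples is at least $|N(cx)|$ in the notation of the previous section. But we need a \emph{sharper} count than the crude bound \eqref{Nx_lower_bound}: we need that the number of tuples $I$ with $A_I\le y$ is $\Omega(y\,/\log^{-\frac{n-1}{2}}y)$ — wait, in fact it is known to be $\asymp y$ (it is $\sim C y$ by a Tauberian/renewal argument since the Dirichlet series $\sum_I A_I^{-s}=(1-\sum a_i^{-s})^{-1}$ has a simple pole at $s=1$), so the number of tuples is already $\Omega(x)$. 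The logarithmic loss must therefore come entirely from the overcounting step: we must bound how many tuples $I$ can map $s$ into a single short interval, or equivalently bound the multiplicity with which a given orbit value is hit.

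The key step — and the main obstacle — is thus to show that the "collision count" is not too large: that
\[
\bigl|\langle F:s\rangle\cap[0,x]\bigr|\;\ge\;\frac{\#\{I : f_I(s)\le x\}}{M(x)},
\]
where $M(x)=O(\log^{\frac{n-1}{2}}x)$ is a bound on the maximal number of distinct tuples $I$ with the same value $f_I(s)$ (or, more robustly, on the average multiplicity over the relevant range). To control $M(x)$ I would argue as follows. If $f_I(s)=f_J(s)$ with $I\ne J$, then $A_I s+B_I=A_J s+B_J$. Using freeness, such coincidences are constrained: for tuples of a fixed length $r$ there are $n^r$ of them, their values $A_I s+B_I$ all lie in an interval of length $O(\gamma^r)$ but are spread out, and a counting/variance argument (second moment: $\sum_{\text{values }v}(\text{mult of }v)^2$ equals the number of ordered pairs $(I,J)$ with $f_I(s)=f_J(s)$) should give that the number of colliding pairs among length-$\le R$ tuples is $O(n^R/\mathrm{poly})$ — but getting exactly the exponent $\frac{n-1}{2}$ suggests instead a more delicate argument grouping tuples by their multiset of letters: the number of tuples of length $r$ with a prescribed letter-frequency vector $(r_1,\ldots,r_n)$ with $\sum r_k=r$ is the multinomial $\binom{r}{r_1,\ldots,r_n}$, all of which share the same product $A_I=\prod a_k^{r_k}$; summing $\binom{r}{r_1,\ldots,r_n}$ over the $\Theta(r^{n-1})$ frequency vectors with a given $A_I$-value in a dyadic range, against the $\Theta(n^r)$ total and the $\Theta(\sqrt{r}^{\,n-1})$-sized "typical" multinomial near the mode (by the local central limit theorem, since $1/a_1,\ldots,1/a_n$ are the relevant probabilities and $\sum 1/a_i=1$), produces the factor $\log^{\frac{n-1}{2}}x$ after substituting $r\asymp\log x$. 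Thus the distinct values correspond to distinct $(A_I,B_I)$ pairs, of which there are $\Omega(x/\log^{\frac{n-1}{2}}x)$ once we divide the $\Omega(x)$ tuples by the $O(\log^{\frac{n-1}{2}}x)$ typical multiplicity, completing the bound.

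I expect the delicate part to be making the preceding heuristic rigorous: one must show not merely that \emph{most} tuples of length $\asymp\log x$ fall into the high-multiplicity mode, but that the number of \emph{distinct values} they realize is genuinely $\Omega(x/\log^{\frac{n-1}{2}}x)$ — i.e. one needs a lower bound on the number of distinct products $A_I=\prod a_k^{r_k}$ (and then distinct $(A_I,B_I)$, using freeness again to separate the $B_I$), which requires a multiplicative-independence input on $a_1,\ldots,a_n$ or a careful pigeonhole over dyadic scales. A clean way to package this is: partition tuples by length $r\in[\tfrac12\log_\gamma x,\log_\delta x]$ and by letter-frequency vector; within each class all $A_I$ agree; show by the local CLT that summing the class sizes $\binom{r}{r_1,\ldots,r_n}$ over the $\Theta(r^{(n-1)/2})$ frequency vectors within a $\sqrt r$-ball of the mode already accounts for a positive proportion of all $n^r$ tuples; hence the number of distinct frequency vectors (hence distinct $A_I$, hence — invoking freeness to rule out $B_I$-collisions for tuples with different frequency vectors, or handling those collisions separately — distinct orbit values) is $\Omega(n^r/r^{(n-1)/2})$; finally sum over $r$ and translate $r\asymp\log x$, $n^r\asymp x$ to obtain $\Omega(x/\log^{(n-1)/2}x)$.
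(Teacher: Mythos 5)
Your sketch assembles several of the right ingredients (tuples grouped by letter-frequency vector, Stirling/local CLT producing the exponent $\tfrac{n-1}{2}$, lengths $r\asymp\log x$), but the logical assembly has a genuine gap, and the step you yourself flag as ``the main obstacle'' is exactly where it breaks. Your plan is to count all tuples $I$ with $f_I(s)\le x$ (correctly $\Theta(x)$) and divide by a multiplicity bound $M(x)=O(\log^{\frac{n-1}{2}}x)$. No such bound is established, and the sketch of it is inverted: you conclude that ``the number of distinct frequency vectors \dots is $\Omega(n^r/r^{(n-1)/2})$'', yet the number of frequency vectors of length-$r$ tuples is at most $\binom{r+n-1}{n-1}=O(r^{n-1})$, polynomial in $r$, so distinct frequency vectors cannot be the source of exponentially many distinct values. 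Moreover, freeness cannot ``rule out $B_I$-collisions for tuples with different frequency vectors'': a coincidence $A_I s+B_I=A_J s+B_J$ with $A_I\ne A_J$ is a purely numerical event at the single point $s$ about which freeness says nothing, and these are precisely the collisions your division step would have to control. What freeness does give (at most one tuple per slope per value, summed over the $O(\log^{n-1}x)$ frequency vectors) yields only $\Omega(x/\log^{n-1}x)$, short of the theorem; a second-moment route would require bounding the number of colliding pairs across different slopes, which you do not attempt.

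The observation you are missing --- and which makes the paper's proof short --- is that within a \emph{single} frequency class freeness already does all the work: the $\binom{r}{r_1,\ldots,r_n}$ compositions whose letter-frequency vector is $(r_1,\ldots,r_n)$ all have the same slope $\prod_i a_i^{r_i}$ and, being pairwise distinct functions, must have pairwise distinct intercepts, hence pairwise distinct values at \emph{every} point $s$. So one does not count all tuples and divide by a multiplicity: one picks the single class at the mode, $k_i=\lfloor t/a_i\rfloor$, whose size $\binom{\sum k_i}{k_1,\ldots,k_n}$ is, by Stirling, $e^{t\sum_i (\log a_i)/a_i}\,t^{-\frac{n-1}{2}}$ up to bounded factors, while all of its values at $s$ lie in $[0,\,C\prod_i a_i^{t/a_i}]$ by the Lemma preceding Theorem \ref{lowerbound}; choosing $t\asymp\log x$ so that this upper bound equals $x$ gives $\Omega(x/\log^{\frac{n-1}{2}}x)$ distinct orbit elements directly, with no Tauberian count, no collision analysis, and nothing beyond Stirling's formula.
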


The main estimate follows from the next Lemma, which shows that we can construct sufficiently many compositions having equal slope and distinct free coefficients.


\begin{lemma}
\label{density_estimation}
Let $F$ be a set of affine functions $F = \{f_i(x) = a_i x + b_i \mid i=1,\ldots,n\}$ such that $a_i > 1, b_i \geq 0$. Suppose that $F$ is a basis of a free semigroup. Then for any $k_1,\ldots,k_n\in \mathbb N$ there exists a set $\mathcal C$ consisting of compositions of functions from $F$ having equal slope $\prod\limits_{i=1}^n a_i^{k_i}$, such that 

a) $|\mathcal S|\geq\left(\sum\limits_{i=1}^n k_i\right)!/(\prod\limits_{i=1}^n k_i!)$

b) For any function $f\in C$ and any $s\geq 0$ the value $f(s)$ is bounded as
\[ 0\leq f(s)\leq M(k_1,k_2,\ldots, k_n;s)=\left(\dfrac{\max_{i=1}^n b_i}{\min_{i=1}^n a_i - 1}+s\right) \prod\limits_{i=1}^n a_i^{k_i}.\]
\end{lemma}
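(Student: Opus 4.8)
The plan is to build the set $\mathcal C$ by fixing, once and for all, a multiset of $\sum_i k_i$ symbols in which the symbol $i$ occurs exactly $k_i$ times, and then letting $\mathcal C$ range over the compositions $f_{i_1}\circ\cdots\circ f_{i_r}$ (with $r=\sum_i k_i$) obtained from all distinct orderings of this multiset. The number of such orderings is the multinomial coefficient $\bigl(\sum_i k_i\bigr)!/\prod_i k_i!$, which gives part (a) — once we check that distinct orderings give distinct compositions. By formula \eqref{direct_computation} from the previous lemma, the slope of $f_{i_1}\circ\cdots\circ f_{i_r}$ depends only on the multiset $\{i_1,\ldots,i_r\}$, not on the order, and equals $\prod_{j=1}^r a_{i_j}=\prod_{i=1}^n a_i^{k_i}$; so every element of $\mathcal C$ has the required common slope. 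That distinct orderings yield distinct functions is exactly where freeness of the semigroup enters: if two orderings gave the same affine function they would yield the same composition as semigroup elements, contradicting that $F$ is a basis of a free semigroup (the two words are distinct tuples in $\{1,\ldots,n\}^r$, so this is precisely the negation of a relation of type \eqref{eq-rel}). This is the step I expect to be the only real subtlety — one must be careful that ``equal as functions $\mathbb R\to\mathbb R$'' is the same as ``equal as semigroup elements'', which holds because the action of the affine semigroup on $\mathbb R$ is faithful (two affine maps agreeing on more than one point are identical, and distinct reduced words in a free semigroup of affine maps with $a_i>1$ cannot collapse).

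For part (b), nonnegativity $f(s)\geq 0$ is immediate from \eqref{direct_computation} since $s\geq 0$, all $a_i>1>0$ and all $b_i\geq 0$, so every summand is nonnegative. For the upper bound, apply the auxiliary Lemma (inequality \eqref{upper_est}) directly with the tuple $I=(i_1,\ldots,i_r)$ realizing $f\in\mathcal C$: it gives
\[
f(s)=f_I(s)\leq \frac{s(\delta-1)+\beta}{\delta-1}\,a_{i_1}a_{i_2}\cdots a_{i_r}
=\left(s+\frac{\beta}{\delta-1}\right)\prod_{i=1}^n a_i^{k_i},
\]
where $\delta=\min_i a_i$ and $\beta=\max_i b_i$, and this is exactly $M(k_1,\ldots,k_n;s)$. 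So part (b) reduces to a one-line invocation of the earlier lemma together with the slope computation.

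In summary, the only nontrivial ingredient is the injectivity of the map (ordering of the multiset) $\mapsto$ (affine function), which rests on the freeness hypothesis; everything else is bookkeeping with \eqref{direct_computation} and \eqref{upper_est}. I would present it in the order: (1) define $\mathcal C$ via orderings of the fixed multiset and compute the common slope from \eqref{direct_computation}; (2) invoke freeness to deduce that the $\bigl(\sum k_i\bigr)!/\prod k_i!$ orderings give pairwise distinct functions, proving (a); (3) read off $0\le f(s)$ from \eqref{direct_computation} and the bound $f(s)\le M(k_1,\ldots,k_n;s)$ from \eqref{upper_est}, proving (b).
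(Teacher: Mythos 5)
Your proposal is correct and follows essentially the same route as the paper's proof: define $\mathcal C$ as all orderings of the multiset with $k_i$ copies of symbol $i$, count them with the multinomial coefficient, use freeness of the semigroup to see they give distinct functions, and apply the earlier auxiliary lemma for the bound in (b). Your extra remark on why ``equal as functions'' coincides with ``equal as semigroup elements'' is a welcome clarification the paper leaves implicit.
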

\begin{proof}
Let $\mathcal C$ be the set of compositions $f_{i_1}\circ f_{i_2} \circ \ldots \circ f_{i_L}$, where $L=k_1+k_2+\ldots + k_n$ and the tuple $(i_1,i_2,\ldots,i_L)$ contains each number $i\in \{1,\ldots, n\}$ exactly $k_i$ times. In this case, the slope of any function $f\in \mathcal C$ is equal to $\prod\limits_{i=1}^n a_i^{k_i}$, and all functions in $\mathcal  C$ are distinct because $F$ is a basis of a free semigroup. The number of tuples $(i_1,i_2,\ldots,i_d)$ with the property described above is then equal to the number of words in the alphabet $\{1,\ldots,n\}$ containing $k_1,k_2,\ldots,k_n$ letters of the respective type, which is $\left(\sum\limits_{i=1}^n k_i\right)!/(\prod\limits_{i=1}^n k_i!)$ due to the multinomial theorem.

In (b) the estimation for the value $f(s)$ is immediate with the use of Lemma \ref{upper_est}.
\end{proof}



\begin{proof}[Proof of Theorem \ref{subbound}]
For any argument $t\geq 0$ we define functions $k_i(t)=\left\lfloor\dfrac{t}{a_i}\right\rfloor$, hence
\begin{equation}
k_i(t)=\dfrac{t}{a_i}+O(1).
\label{ki-estimation}
\end{equation} which are non-negative integers. Summing all $k_i$ we obtain
\begin{equation}
\sum\limits_{i=1}^n k_i =\sum\limits_{i=1}^n \left(\frac{t}{a_i} + O(1)\right) = t \left(\sum\limits_{i=1}^n \frac1{a_i}\right) + O(1) = t + O(1).
\label{sum-k}
\end{equation}

Now we can use Lemma \ref{density_estimation} for integer parameters $k_i$ being the described functions of integer $t$ to construct the corresponding set $\mathcal C(t)$. Evaluating the functions from $\mathcal C(t)$ in point $s$ we obtain integer numbers from $\langle F : s 
\rangle$ belonging to the segment $[0,M(k_1(t),k_2(t),\ldots, k_n(t);s)]$, and their number is bounded from below by \[
N=\dfrac{\left(\sum\limits_{i=1}^n k_i\right)!}{\left(\prod\limits_{i=1}^n k_i!\right)}
.\]

In what follows further we will be establishing the bound for $N$ which is suitable to prove the main statement of Theorem. We consider the expression above as a function of $t$ and take a logarithm of it, and using a simplification of Stirling's formula $\log(x!)=x\log x-x+\frac{1}{2}\log 2\pi x + O(1/x)=x\log x-x+\frac{1}{2}\log x + O(1)$ for $x\to \infty$, we obtain
\begin{multline*}
\log N = (\sum\limits_{i=1}^n k_i)\log \sum\limits_{i=1}^n k_i-\sum\limits_{i=1}^n k_i+\frac12\log\sum\limits_{i=1}^n k_i + O\left(1\right) \\
- \sum\limits_{i=1}^n \left(k_i \log k_i - k_i + \frac12\log k_i + O\left(1\right)\right).
\end{multline*}

Canceling out $\sum\limits_{i=1}^n k_i$ and grouping the other terms we rewrite the latter as 
\begin{equation}
\log N=\sum\limits_{i=1}^n \left(k_i \log \sum\limits_{j=1}^n k_j-k_i \log k_i\right)
+ \frac{1}{2}\left(\log\sum\limits_{i=1}^n k_i - \sum\limits_{i=1}^n \log k_i\right) + O(1).
\label{long_expression}
\end{equation}

Using \eqref{ki-estimation} and \eqref{sum-k} together with $\log (1 + x) = x+o(x)$, we obtain
\begin{equation}
\log k_i =
\log t-\log a_i + O\left(1/t\right)
= \log t + O(1),
\label{log-k}
\end{equation}
\begin{equation}
\log \sum\limits_{j=1}^n k_j = \log t+
O\left(1/t\right)
= \log t + O(1),
\label{log-sum}
\end{equation}
We use two equalities of \eqref{log-sum} to rewrite \eqref{long_expression} as
\begin{equation}
\log N=\sum\limits_{i=1}^n\left (k_i (\log t+
O\left(1/t\right)
)-k_i \log k_i
\right)
+ \frac{1}{2}\left(\log t- \sum\limits_{i=1}^n \log k_i\right) + O(1).
\label{two-summands}
\end{equation}


Using \eqref{log-k} we further rewrite the first sum in \eqref{two-summands} as 
\[
\sum\limits_{i=1}^n \left(k_i \left(\log t+
O\left(1/t\right)
-(\log t-\log a_i+
O\left(1/t\right)
)
\right)\right)= \sum\limits_{i=1}^n \left(k_i \left(\log a_i + O\left(1/t\right)\right)\right),
\]
and then applying \eqref{log-k} to $\sum\limits_{i=1}^n \log k_i$ in the second part of  \eqref{two-summands}, we come up with 
\begin{equation}
\log N = \sum\limits_{i=1}^n\left(k_i \left(\log a_i + O\left(1/t\right)\right)\right) - \left(\frac{n-1}2\right)\log t + O(1).
\label{log-expression}
\end{equation}

Using \eqref{ki-estimation} we can rewrite \eqref{log-expression} as
\begin{equation}
\log N = t
\sum\limits_{i=1}^n \frac{\log a_i}{a_i} - \left(\frac{n-1}2\right)\log t + O(1).
\label{final-expression}
\end{equation}


For any fixed $s\geq 0$ we estimate $M(t)=M(k_1(t),\ldots, k_n(t);s)$ as
\[
M(t)=\left(\dfrac{\max_{i=1}^n b_i}{\min_{i=1}^n a_i - 1}+s\right) \prod\limits_{i=1}^n a_i^{\lfloor t/a_i\rfloor} = C\cdot\prod\limits_{i=1}^n a_i^{\lfloor t/a_i\rfloor}
 \leq C\cdot\prod\limits_{i=1}^n a_i^{ t/a_i}.
\]
Then for any $x\geq 0$ we select the following $t$ such that $M(t)\leq x$

\begin{equation*}
t = \frac{\log x - \log C}{
\sum\limits_{i=1}^n \frac{\log a_i}{a_i}},
\end{equation*}
which is positive for $x$ being large enough, and clearly
\begin{equation}
t = \frac{\log x}{
\sum\limits_{i=1}^n \frac{\log a_i}{a_i}} + O(1).
\label{t-expression}
\end{equation}

Now we put \eqref{t-expression} into \eqref{final-expression} and obtain
\begin{equation*}
\log N = \log x - \left(\frac{n-1}2\right)\log \log x + O(1).
\end{equation*}
The exponentiation of this equation gives the estimation 
\begin{equation*}
N = \left(x / \log^{\frac{n-1}2} x\right) e^{O(1)} = \Theta\left(x / \log^{\frac{n-1}2} x\right),
\end{equation*}
which is sufficient to establish the main statement of Theorem.
\end{proof}

\section{Partitions of integers and ping-pong with remainders}

\label{ecc-section}

\subsection{Klarner tuples and the Ping-pong lemma}

In \cite{Klarner82} Klarner established some necessary and sufficient conditions for a set of functions $F=\{ a_ix+b_i\ \mid i=1,\ldots,k\}$ to generate a free semigroup under an operation of composition. The statement of the sufficiency theorem was rather intricate, and the proof involved technical arguments using orders on the sequence of compositions. The paper  \cite{KolTal} suggested a geometrical argument, which both simplified the meaning of the statement and the proof of this theorem. The argument was based on the following version of Ping-pong Lemma, which is a classical tool that can be used to show that a certain set of functions generate a free semigroup under a composition operation. 

\medskip

\textbf{The Ping--Pong Lemma.} \emph{Let $S$ be a semigroup generated by the set of functions $\{f_1, \ldots, f_r\}$ inside the group $\mathrm{Aff}(\mathbb{R})$. If there exists a collection of non-empty  mutually non-intersecting sets $I_1, \ldots, I_r$ such that $f_i(\cup_{j=1}^n I_j) \subset I_i$ for all $1\le i \le r$, then $S$ is a free semigroup of rank $r$ with free basis $f_1, \ldots, f_r$.}

\medskip

This lemma was used in \cite{KolTal} for the action of the inverse functions $f_i^{-1}=x/a_i-b_i/a_i$ on some open interval $I\subset\mathbb R$. As we want to allocate the image intervals $f_1^{-1}(I),\ldots, f^{-1}_r(I)$ inside $I$ without intersection, we have to satisfy the inequality $1/a_1+\ldots+1/a_n\leq 1$.
In the case of equality, such allocation becomes rigid and is defined by the order of the image intervals in $I$. If the intervals go from left to right, then the right end of $f_i^{-1}(I)$ should be exactly the left end of $f_{i+1}^{-1}(I)$. An easy calculation then gives a formula which for a given tuple of positive numbers $(a_1, a_2, \ldots, a_n)$ with $1/a_1+\ldots+1/a_n=1$ computes a tuple $(b_1,b_2,\ldots,b_n)$ such that $F^{-1}$ (and hence $F$ too) generates a free semigroup. 

An explicit formula for the functions from $F$ would be 
\begin{equation}
f_i(x) = a_i x + d_1 a_i \sum_{j < i} \frac 1{a_j} + d_2(a_i - 1),
\label{tuple-generation}
\end{equation}
where two real numbers $d_1$ and $d_2$ are parameters, which correspond to the location of the open interval $I\subset \mathbb R$. Namely, $I=(d_2-d_1,d_2)$, so $d_1$ is the width of the interval $I$ and $d_2$ is the location of its right end. The change of the parameters $d_1$ and $d_2$ corresponds to the conjugation of functions from $F$ by an affine function $g=d_1^{-1}x+d_2/d_1$, i.e. consideration of the set $g^{-1}Fg$. 

If we are given a set of slopes $\{a_1,a_2,\ldots,a_n\}$, then we can order it in at most $k!$ ways, for each of which we can compute the tuple of free coefficients by \eqref{tuple-generation}. The integer tuples of Klarner's theorem for $1/a_1+1/a_2+\ldots 1/a_n=1$ can be generated in such a way if we take $d_2=0$ and choose $d_1\in\mathbb Q$ so that
\[
d_1 a_i \sum_{j < i} \frac{1}{a_j}
\]
are integers and collectively coprime. As, according to Theorem~1 in \cite{KolTal}, this exactly corresponds to the conditions stated originally in Theorem 2.3 of \cite{Klarner82}, we suggest a name of a \emph{Klarner tuple} to any $(b_1,\ldots,b_r)$ computed thus from an integer tuple $(a_1,\ldots,a_r)$.\footnote{The formula \eqref{tuple-generation} can be used not only for integer slopes, but also in the rational and irrational cases, in which the choice of $d_1=1$ and $d_2=0$ provides a neat normal form for the equivalence class.}


\smallskip

For the set of slopes $A=\{2,3,6\}$ the procedure described above generates the same six tuples enlisted in \cite{Klarner82}: 
\begin{equation}
(0,3,10), \quad (0,2,3), \quad (2,0,15), \quad (1,0,2), \quad (2,1,0), \quad (1,6,0).
\label{klarner-tuples}
\end{equation}

If we take $A'=\{2,4,4\}$ we obtain three tuples: 
\begin{equation}
(0,2,3), \quad (1,0,6), \quad (1,0,1).
\label{244-tuples}
\end{equation}
Even though, for slopes $A$ and tuples \eqref{klarner-tuples} the density problem remains unsolved, we will show that for $A'$ of the tuples in \eqref{244-tuples} gives the positive density of $\langle F : s\rangle$.

\subsection{Ping-pong lemma with remainders and exact covering systems}

The Ping-Pong Lemma can also be employed for the direct functions of $F$ for some discrete sets, for example we can take $S=\mathbb Z$. If the coefficients of the functions $f_i=a_ix+b_i$ are integers, we obtain that $f_i(S)$ is an arithmetic progression of numbers equal to $b_i$ mod $a_i$. Thus, if the progressions $f_1(\mathbb Z),\ldots,f_n(\mathbb Z)$ are pairwise non-intersecting, the Ping-Pong Lemma guarantees that the functions $f_1,\ldots,f_n$ generate the free semigroup. Clearly, the non-intersection condition together with the equality $1/a_1+\ldots+1/a_n=1$ brings us to the case, when 
\[f_1(\mathbb Z) \sqcup \ldots \sqcup f_n(\mathbb Z)=\mathbb Z.\] 
The latter partition of integers is known as an \emph{exact covering system} (see \cite[F14]{Guy-monograph}. Interestingly, this topic was also popularized by Erd\H{o}s, and it was described in the monograph of Erd\H{o}s and Graham \cite[p.24--26] {ErdosGraham} right after the question on orbit sets, but apparently no connection between these two topics was mentioned. We will show, that there is one.

\begin{theorem}
If $F=\{ a_ix+b_i\ \mid i=1,\ldots,n\}$ with integer $a_i>1$ and $b_i\geq 0$ provides a system of exact covering congruences, then $\langle F : s \rangle$ has positive density for any $s\geq 0$.
\label{ecc-theorem}
\end{theorem}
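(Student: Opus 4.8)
The plan is to use Theorem~\ref{lowerbound} to get a linear lower bound on the \emph{multiset} $\langle F : s\rangle^\#$, and then to argue that in the exact-covering-congruence situation the multiset coincides with the set, i.e. the multiplicity of every element is $1$. The first ingredient is immediate: since $F$ gives a partition of $\mathbb Z$, we have $\sum 1/a_i = 1$, so the exponent $\sigma$ from Theorem~\ref{lowerbound} equals $1$, and that theorem yields $\prefix{\langle F : s\rangle^\#} \geq c\,x$ for some constant $c = c(F,s) > 0$ once $x$ is large enough (the filter condition $\filterc$ is eventually satisfied for the single starting point $s$). So the whole content is to upgrade this to the underlying set, which amounts to showing no cancellation occurs.

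The key step is the following injectivity claim: if $f_I(s) = f_J(s)$ for two composition tuples $I = (i_1,\ldots,i_r)$ and $J = (j_1,\ldots,j_\ell)$, then $I = J$. I would prove this by induction on $r + \ell$ using the covering-congruence structure. The value $f_I(s)$ with $I = (i_1,\ldots,i_r)$ equals $f_{i_1}(f_{i_2}\circ\cdots\circ f_{i_r}(s))$, and $f_{i_1}(\mathbb Z)$ is exactly the residue class $b_{i_1} \bmod a_{i_1}$; since the classes $f_1(\mathbb Z),\ldots,f_n(\mathbb Z)$ are pairwise disjoint and every composed value lands in $\mathbb Z$ (as $s$ may be taken an integer, or more generally $f_I(s) \in s + \mathbb Z$ and one works in a fixed coset), the common value $f_I(s) = f_J(s)$ lies in a unique class, forcing $i_1 = j_1$. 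Then $f_{i_1}$ is injective on $\mathbb R$, so $f_{i_2}\circ\cdots\circ f_{i_r}(s) = f_{j_2}\circ\cdots\circ f_{j_\ell}(s)$, and the induction applies. One also needs the base case observation that $f_I(s) = s$ only for the empty tuple, which follows because each $f_i$ is strictly expanding ($a_i > 1$) and $b_i \geq 0$, hence $f_i(t) > t$ for $t \geq 0$, so any nonempty composition strictly increases $s$. Freeness of the semigroup (guaranteed by the Ping--Pong Lemma applied to the disjoint progressions, as noted just before the theorem) is in fact a consequence of this same injectivity, but it is the pointwise statement at the specific point $s$ that we need.

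Granting the injectivity claim, every element of $\langle F : s\rangle$ is hit with multiplicity exactly $1$, so $\langle F : s\rangle^\#$ and $\langle F : s\rangle$ have the same growth function, and the linear lower bound transfers verbatim: $|\langle F : s\rangle \cap [0,x]| \geq c\,x$ for large $x$, which is the positive-density conclusion. I expect the main (though modest) obstacle to be the bookkeeping in the injectivity induction: one must be careful that all the intermediate values $f_{i_{k}}\circ\cdots\circ f_{i_r}(s)$ genuinely lie in the single coset $s + \mathbb Z$ on which the residue-class argument is valid, and handle the edge cases where one of the two tuples becomes empty before the other (ruled out by the strict-expansion remark). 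Once that is set up, no further computation is required beyond quoting Theorem~\ref{lowerbound} with $\sigma = 1$.
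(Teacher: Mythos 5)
Your proposal is correct and follows essentially the same route as the paper: both establish that every element of the orbit multiset has multiplicity one by peeling off the outermost function via the disjointness of the progressions $f_1(\mathbb Z),\ldots,f_n(\mathbb Z)$ (the paper organizes this as a minimal-counterexample argument on the value rather than an induction on word length, but the mechanism is identical), and then invoke Theorem~\ref{lowerbound} with $\sigma=1$ to transfer the linear multiset bound to the set. The only caveat, which the paper's own write-up shares, is the degenerate case $s=0$ with some $b_i=0$, where $f_i(0)=0$ makes the literal claim ``$f_i(t)>t$ for all $t\ge 0$'' false and gives $0$ infinite multiplicity; this is harmless for the density conclusion (restart the orbit from a positive seed) but deserves a remark.
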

\begin{proof}
We will show that the multiplicity of any element in the set $P=\langle F^\# : s\rangle$ is at most one. Indeed, suppose the contrary and since $P\subset \mathbb{Z}_{\geq 0}$, there exists a minimal number $x\in P$ such that its multiplicity is greater than $1$. Then we necessarily have $f_i(u)=f_j(v)$ for some $u,v \in P$ and $u,v\leq x$. However, if $i=j$ then we get a contradiction with the fact that $x$ was minimal, whilst in the case $i\ne j$ we obtain it in $f_i(\mathbb Z)\cap f_j(\mathbb Z)\ne \emptyset$. The multiplicity bound together with Theorem \ref{lowerbound} immediately gives us a linear lower bound $\Omega(x)$ for the growth function $|\langle F : s\rangle|\cap [0,x]$, from which the positivity of $\underline{d}(\langle F : s\rangle)$ follows.
\end{proof}

Now, considering the tuples of \eqref{244-tuples}, we see that the functions $2x+1, 4x+2, 4x+4$ constitute a system of exact covering congruences, so we can apply Theorem \ref{ecc-theorem} for this example of Klarner type  the answer to Erd\H{o}s-Graham question is positive. It is not hard to generate more examples. First, we clearly can change an argument of any progression $ax+b$, substituting it by $ax+b-ah$ for any integer $h$; this gives the same progression but is a different function. Secondly, one can subdivide one progression $ax+b$ to $r$ progressions 
\[
arx+rb_1r+b, \quad arx+rb_2+b, \quad \ldots, \quad arx+rb_r+b, 
\]
where the integers $b_1,\ldots,b_r$ are distinct modulo $r$. This natural operation is called subdivision. S.Porubsky has shown in \cite{Porubsky} that there exist examples, which cannot be obtained by subdivision operation. His smallest example consists of $14$ progressions, but can be shortened to $13$ progressions by considering a projection on the set of even numbers. The corresponding progressions are\footnote{One can see that this set of progressions cannot be generated by the subdivision operation since $\gcd(a_1,\ldots,a_{13})=1$, which is false starting from the first step of any sequence of subdivision operations.}
\begin{eqnarray*}
6x,\quad 6x + 4, \quad 10x + 1, \quad 10x + 3, \quad 10x + 5, \quad 10x + 9, \quad
15x + 2, \\ 30x + 7, \quad 30x + 8, \quad 30x + 14, \quad 30x + 20, \quad 30x + 26, \quad 30x + 27.
\end{eqnarray*}
As this is a system of exact covering congruences, we can also apply Theorem \ref{ecc-theorem} to it.

\smallskip

Unfortunately, for the slope set $\{2,3,6\}$ all cases of free coefficients lead to progressions, which intersect. Moreover, the use of Theorem \ref{ecc-theorem} is impossible when the numbers in the slope set are distinct. This is due to the result obtained in 1950's independently by L.~Mirsky, M.~Newman and B.~Novák--Š.~Znám (see \cite[p.25]{ErdosGraham}, \cite{Newman}, \cite{NovakZnam}), which shows that any system of exact congruences has two progressions with equal differences. A generalization of this result from the group of integers to any group is known as Hertzog-Sch\"onheim conjecture, which is open for more than 50 years (see \cite{HerzogSchoenheim}, \cite{Ginosar}).





\section{Acknowledgments}

The authors are indebted to the anonymous referee for helpful comments on an earlier draft of this paper. The paper was prepared with the support of the Theoretical Physics and Mathematics Advancement Foundation ``BASIS'', grant No. 22-7-2-32-1. The work of the second author was prepared within the framework of the HSE University Basic Research Program.

\bibliographystyle{alpha}

\begin{thebibliography}{99}

\bibitem{ErdosGraham}
\textsc{P.\,Erd\H{o}s, R.\,Graham},
\newblock{``Old and new problems and results in combinatorial number theory''},
\newblock L'Enseignement Math., Monogr., Vol. 28, 1980.

\bibitem{Ginosar} \textsc{Y.\,Ginosar} \newblock{Tile the group}, \newblock{Elem. Math.}, \textbf{73}, 66–-73, (2018).

\bibitem{Guy} \textsc{R.\,C.~Guy}, {``Don't Try to Solve These Problems!''}, Amer. Math. Monthly \textbf{90}(1), (1982), 35--41.

\bibitem{Guy-monograph} \textsc{R.\,C.~Guy}, {``Unsolved problems in number theory''}, 3rd edition, ISBN 978-0-387-20860-2, Springer, (2004).

\bibitem{HerzogSchoenheim} \textsc{H. Herzog and J. Sch\"onheim}, {Research Problem no. 9}, Canad. Math. Bull. \textbf{17} (1974), 150.

\bibitem{KlarnerRado74} \textsc{D.\,A.~Klarner, R.~Rado}, {``Arithmetic properties of certain recursively defined sets''}, Pacific Journal of Math. \textbf{53(2)} (1974), 445--463.

\bibitem{Klarner82} \textsc{D.\,A.~Klarner}, {``A Sufficient Condition for Certain Semigroups to Be Free''}, J. Algebra \textbf{74} (1982), 140--148.

\bibitem{KlaBiSa} \textsc{D.\,A.~Klarner, J.\,C.~Birget, W.~Satterfield}, {``On the undecidability of the freeness of integer matrix semigroups"}, Int. J. Algebra Comput. \textbf{1} (1991), 223--226.

\bibitem{KolTal} \textsc{A.~Kolpakov, A.~Talambutsa}, {``On free semigroups of affine maps on the real line''}, Proc. AMS \textbf{150} (2022), 2301--2307.


\bibitem{KonProTal} \textsc{S.V.~Konyagin, V.Yu.~Protasov, A.L.~Talambutsa}, {``Unique expansions in number systems via refinement equations ''}, Sb. Math., \textbf{216:11} (2025), 1614–1627.

\bibitem{Lagarias} \textsc{J.\,C.~Lagarias}, {``Erd\H{o}s, Klarner, and the $3x+1$ problem''}, Amer. Math. Monthly \textbf{123(8)} (2016), 753--776.

\bibitem{Newman} \textsc{M. Newman}  \newblock{Roots of unity and covering sets}, \newblock{Math. Ann.}, \textbf{191} (1971), 279–282.

\bibitem{NovakZnam} \textsc{B.~Novák, Š.~Znám}, \newblock{Disjoint Covering Systems}, \newblock{Amer. Math. Monthly}, \textbf{81(1)} (1974), 42--45.

\bibitem{Porubsky} \textsc{Š.~Porubský}, {Natural exactly covering systems of congruences}, Czechoslovak Mathematical Journal, \textbf{24(4)} (1974), 598--606.


\end{thebibliography}

\end{document}